\newcommand{\Hc}{\mathcal{H}}
\newcommand{\Zb}{\mathbb{Z}}
\newcommand{\Red}{\mathbb{R}^d}
\newcommand{\C}{\mathcal{C}}
\newcommand{\Ci}{\mathcal{C}}
\newcommand{\vol}[1]{\mathrm{vol}\left(#1\right)}
\newcommand{\cvx}[1][d]{\mathrm{Cvx}(#1)}
\newcommand{\st}{\;:\;}
\newcommand{\Kd}{\mathcal{K}_d}
\newcommand{\Qd}{\mathcal{Q}_d} 
\newcommand{\Quanthyp}{\Qd\left(v^\ell\right)} 
\newcommand{\Quantchain}{\left(\Quanthyp\right)_{\ell \in \mathbb{Z}}} 
\newcommand{\Hch}{(\Hc_\ell)_{\ell \in \mathbb{Z}}} 
\newcommand{\colortimes}{\otimes}
\newtheorem{theorem}{Theorem}
\newtheorem{cor}[theorem]{Corollary}
\newtheorem{lemma}[theorem]{Lemma}
\newtheorem{conj}{Conjecture}
\theoremstyle{definition}
\newtheorem{definition}[theorem]{Definition}
\title{Quantitative Helly-type Theorems via Hypergraph Chains}
\author{Attila Jung}
\address{Department of Computer Science, Eötvös Loránd University, Budapest.}
\thanks{Research was supported by the Rényi Doctoral Fellowship of the Rényi Institute, the NKFIH grant FK132060 and the Thematic Excellence Program TKP2021-NKTA-62 of the National Research, Development and Innovation Office}
\email{jungattila@gmail.com}
\begin{document}
	\thispagestyle{empty}
	\begin{abstract}
		We propose a combinatorial framework to analyze quantitative Helly-type questions. Using this framework, we prove a Quantitative Fractional Helly Theorem with Fractional Helly Number $3d$ and a stability version of the Quantitative Helly Theorem of Bárány, Katchalski, and Pach.
	\end{abstract}
	\maketitle
	
	\section{Introduction}
	Two directions in the study of Helly-type Theorems are \emph{quantitative} and \emph{abstract} questions.  \emph{Quantitative} results concern intersection patterns of convex sets in some specific space, originally $\Red$, where instead of finding points in the intersection, one bounds the size, for example the volume or the diameter of the intersection. \emph{Abstract} results, on the other hand, study more general structures, e.g. hypergraphs, with certain properties that capture some essential aspects of the behavior of convex sets. In this note, we connect the two.
	
	There are quantitative results, where the usual combinatorial techniques are not directly applicable, since more than one intersection pattern of convex sets are involved in them (e.g. convex sets intersecting in \emph{large} and in \emph{small} volumes). In this note, we present a combinatorial framework in which these quantitative Helly-type questions can be analyzed. In particular, we propose the definition of hypergraph chains (see Definitions \ref{def:Hchain}, \ref{def:QHellyChain}, \ref{def:QCHChain} and \ref{def:QFHChain}) and prove our main result, Theorem~\ref{thm:Qholmsen}, that a certain type of Quantitative Colorful Helly Theorem implies a Quantitative Fractional Helly Theorem.
	
	First, consider the \emph{Quantitative Volume Theorem}.
	\begin{theorem}[B\'ar\'any, Katchalski and Pach \cite{BKP82}]\label{thm:QH}
		Assume that the intersection of any $2d$ members of a finite family of convex sets in $\Red$ is of volume at least one. Then the volume of the intersection of all members of the family is of volume at least $c(d)$, a constant depending on $d$ only.
	\end{theorem}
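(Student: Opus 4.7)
The plan is to combine an affine normalization via John's theorem with a combinatorial selection of $2d$ supporting half-spaces from the family, and to conclude by a volume sandwich. Setting $K = \bigcap_{C \in \mathcal{F}} C$, which is a nonempty convex body by Helly's theorem and the hypothesis, I let $E$ denote the ellipsoid of maximal volume in $K$. By John's theorem, $E \subseteq K \subseteq d \cdot E$ (the dilation being about the center of $E$), and an affine transformation $T$ sending $E$ to $\Ball$ puts us in the normalized situation $\Ball \subseteq T(K) \subseteq d \Ball$. Under $T$ every volume rescales by the common factor $|\det T|$, so the hypothesis becomes $\vol{\bigcap_j T(C_{i_j})} \geq |\det T|$ on every $2d$-fold subfamily.

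The combinatorial heart of the proof is to produce $2d$ sets $C_1, \ldots, C_{2d}$ in $\mathcal{F}$ whose normalized images have intersection contained in a region of volume at most some polynomial-in-$d$ quantity $M(d)$. The natural target is the cube $[-d, d]^d$: if for each of the $2d$ directions $\pm e_k$ one can find $C_{k,\pm} \in \mathcal{F}$ with $T(C_{k,\pm}) \subseteq \{x \st \pm e_k \cdot x \leq d\}$, then $\bigcap_{k,\pm} T(C_{k,\pm})$ lies in this cube and $M(d) = (2d)^d$ suffices. Such a $C_{k,\pm}$ exists whenever the outward normal $\pm e_k$ at the corresponding extreme point of $T(K)$ lies in the normal cone of a single $C_j$, rather than only as a conic combination of several. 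This is the main obstacle: at a non-smooth boundary point of $T(K)$ the normal cone of $K$ genuinely decomposes into contributions from several $C_j$ and no single set suffices in that coordinate direction. I would resolve it by a small generic perturbation of the $2d$ directions away from the coordinate axes, for which the corresponding extremal points of $T(K)$ become smooth boundary points determined by a unique $C_j$; a continuity argument then preserves the polynomial-in-$d$ volume bound up to a constant factor.

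Granted the selection, a short volume sandwich finishes the argument: on the normalized side $\vol{\bigcap_i T(C_i)} \leq M(d)$, while the hypothesis gives $\vol{\bigcap_i C_i} \geq 1$, so $|\det T| \leq M(d)$. Combined with $\vol{T(K)} \geq \vol{\Ball}$ from $\Ball \subseteq T(K)$, this yields
\[
\vol{K} \;=\; \frac{\vol{T(K)}}{|\det T|} \;\geq\; \frac{\vol{\Ball}}{M(d)} \;=:\; c(d),
\]
a positive constant depending only on $d$, as required.
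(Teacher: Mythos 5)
The paper cites Theorem~\ref{thm:QH} from \cite{BKP82} without reproducing a proof, so there is nothing internal to compare against; I assess your proposal on its own terms.

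Your John normalization and final volume sandwich are fine, but the heart of the proof---producing $2d$ members of $\mathcal F$ whose normalized intersection lies in a bounded region---has a genuine gap, and the perturbation idea does not close it. Consider the essential case where each $C_j$ is a closed half-space, so $K$ is a polytope. For a half-space $C_j = \{x \st n_j\cdot x \le c_j\}$, the image $T(C_j)$ is again a half-space, and it is contained in $\{x \st \pm e_k \cdot x \le d\}$ \emph{only if} its outward normal is a positive multiple of $\pm e_k$. Generically none of the normals of the $T(C_j)$ are axis-aligned, so the desired $C_{k,\pm}$ simply does not exist for any coordinate direction. The perturbation makes things worse, not better: for a polytope $K$, a generic direction $u$ supports $K$ at a \emph{vertex}, whose normal cone is the conic hull of $d$ or more facet normals, and the perturbed $u$ lies in the relative interior of that cone, hence is not achieved by any single facet (equivalently, by any single half-space $C_j$). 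The only directions realized by a single $C_j$ are the facet normals themselves, a set of measure zero; pushing $u$ off the axes cannot land on them.

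What is missing is a mechanism that chooses the $2d$ sets according to \emph{their own} normal structure rather than a prescribed coordinate frame; this is exactly where Steinitz's theorem enters in \cite{BKP82}. After John normalization, writing $T(K)=\bigcap_i\{x\st n_i\cdot x\le c_i\}$ with $\|n_i\|=1$, the inclusion $\Ball\subseteq T(K)$ forces $c_i\ge 1$, while $T(K)\subseteq d\Ball$ forces $0$ to lie in $\conv\{n_i\}$ with depth at least $1/d$ (otherwise $T(K)$ would contain a ray of length $>d$). A quantitative Steinitz lemma then selects $2d$ of the $n_i$ whose convex hull still contains a ball of controlled radius around $0$, and the intersection of the corresponding $2d$ half-spaces is a bounded polytope of controlled volume. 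This step, not the coordinate cube, is what produces the constant $c(d)$, and it is also where the $d^{-2d^2}$ (or the improved $d^{-3d/2}$ of Brazitikos) loss actually occurs; that your scheme would yield the substantially better $\vol{\Ball}/(2d)^d$ is itself a signal that a nontrivial step has been skipped. A smaller point: that $K=\bigcap_{C\in\mathcal F}C$ has nonempty interior is part of the conclusion, not a free consequence of Helly (Helly gives only $K\ne\emptyset$ from the $(d+1)\le 2d$-wise hypothesis), and should be justified, e.g.\ by outer-parallel-body approximation and a limiting argument.
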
 
	In \cite{BKP82}, it is proved that one can take $c(d)=d^{-2d^2}$ and conjectured that it should 
	hold with $c(d)=d^{-cd}$ for an absolute constant $c>0$. Theorem~\ref{thm:QH} was confirmed  with 
	$c(d)\approx d^{-2d}$ by Naszódi \cite{MR3439267}, whose argument was refined by 
	Brazitikos \cite{Bra17}, who showed that one may take $c(d)\approx d^{-3d/2}$. 
	For more on quantitative Helly-type results, see the surveys 
	\cite{DGFM19survey, HW18survey}.
	
	Helly's theorem may be stated in the language of hypergraphs as follows. Let $V$ be a finite family of convex sets in $\Red$, and call a subset of $V$ an edge of our hypergraph, if the intersection of the corresponding convex sets is not empty. Helly's theorem states that if all $(d+1)$-tuples of a subset $S$ of $V$ are edges of the hypergraph, then so is $S$. Observe that Theorem~\ref{thm:QH} cannot be translated to the same language, as two hypergaphs are involved: in one, the edges correspond to families of convex sets whose intersection is of volume at least one, and in the other, this volume is at least $c(d)$. The goal of this note is to provide a combinatorial framework in which Theorem~\ref{thm:QH}, and other quantitative results can be translated.
	
	The \emph{Colorful Helly Theorem}\label{page:chelly} found by 
	Lov\'asz \cite{Lo74} (and with the first published proof by B\'ar\'any 
	\cite{MR676720}) states the following.
	\emph{If 
		$\Ci_1,\dots, \Ci_{d+1}$ are finite families (color classes) of convex sets in 
		$\Red$, such 
		that for any colorful selection $C_1\in \Ci_1,\dots, C_{d+1}\in \Ci_{d+1}$, the 
		intersection $\bigcap\limits_{i=1}^{d+1} C_i$ is non-empty, then for some $j$, 
		the intersection $\bigcap_{C\in \Ci_j} C$ is also non-empty.}
	
	In \cite{damasdi2021colorful}, the following quantitative variant is shown.
	
	\begin{theorem}[Damásdi, Földvári and Naszódi \cite{damasdi2021colorful}]\label{thm:QCH}
		Let $\C_1,\linebreak[0]\ldots,\C_{3d}$ be finite families of convex sets in 
		$\Red$. Assume that for any colorful selection $C_1\in \Ci_1,\dots, C_{3d}\in \Ci_{3d}$, the 
		intersection $\bigcap\limits_{i=1}^{3d} C_i$ is of volume at least one. 
		
		\noindent Then, there is a $j$ with $1\leq j \leq 3d$ such that 
		$\vol{\bigcap\limits_{C\in \C_j}C}\geq d^{-cd^2}$ with a universal constant $c>0$.
	\end{theorem}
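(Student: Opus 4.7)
The plan is to follow the minimum-volume-witness template underlying the proofs of the (non-colorful) Quantitative Volume Theorem in \cite{MR3439267, Bra17}. First fix a colorful selection $(C_1^*,\ldots,C_{3d}^*)$ whose intersection $P:=\bigcap_{i=1}^{3d}C_i^*$ has minimum volume among all colorful intersections; by hypothesis $\vol{P}\geq 1$. Let $E$ be the John ellipsoid of $P$ and apply an affine transformation so that $E=\Ball$, giving the sandwich $\Ball\subseteq P\subseteq d\,\Ball$. This rescaling distorts all volumes by a single determinant factor that will be absorbed into the final constant.

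Next, for every color $j$ and every $C\in\C_j$, substituting $C$ for $C_j^*$ yields another colorful selection, so $\vol{C\cap\bigcap_{i\neq j}C_i^*}\geq \vol{P}$. Writing $P_{\hat j}:=\bigcap_{i\neq j}C_i^*\supseteq P$, this says each $C\in\C_j$ captures all but a controlled volume of $P_{\hat j}$: because $P_{\hat j}\subseteq d\,\Ball$, the ``cut-off'' piece $P_{\hat j}\setminus C$ has volume bounded purely in terms of $d$.

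To finish I would need to promote these per-set statements into a statement about $\bigcap_{C\in\C_j}C$ for some color $j$. A natural route is discretization: cover the unit sphere by $d^{O(d)}$ spherical caps, classify each $C\in\C_j$ by which caps its boundary cuts off from $P_{\hat j}$, and pigeonhole across the $3d$ colors to extract a color $j$ for which $\C_j$ has a coherent cutting pattern. Combined with the John-roundness of $P$ and integral-geometric estimates for halfspace sections of a Euclidean ball, this should yield that $\bigcap_{C\in\C_j}(C\cap P_{\hat j})$ has volume at least $d^{-cd^2}$, after which one application of Theorem~\ref{thm:QH} lower-bounds $\vol{\bigcap_{C\in\C_j}C}$ as desired.

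The hardest step is this promotion from per-set to global control: a priori, infinitely many convex sets each covering almost all of $P_{\hat j}$ can have intersection of arbitrarily tiny volume, so the argument must genuinely exploit both the John-roundness of $P$ and the abundance of colors ($3d$ rather than $d+1$). The doubly-exponential factor $d^{-cd^2}$ likely reflects $O(d)$ separate Helly- or covering-type steps inside the proof, each of which loses a multiplicative $d^{-O(d)}$; organizing them so that the losses combine rather than multiply unnecessarily is where the bookkeeping is delicate.
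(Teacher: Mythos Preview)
This theorem is not proved in the present paper; it is quoted from \cite{damasdi2021colorful} and used as a black box (to show that $\Quantchain$ has Colorful Helly Number $3d$). So there is no proof here to compare against.

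That said, your sketch has two issues worth flagging. First, a concrete error: after normalizing so that $\Ball\subseteq P\subseteq d\,\Ball$, you assert $P_{\hat j}=\bigcap_{i\neq j}C_i^*\subseteq d\,\Ball$. This is false in general; $P_{\hat j}$ strictly contains $P$ and can be unbounded (take all $C_i^*$ to be half-spaces). Hence the step ``$P_{\hat j}\setminus C$ has volume bounded purely in terms of $d$'' does not follow from the minimality substitution $\vol{C\cap P_{\hat j}}\geq\vol{P}$. You would need to restrict attention to $P$ (or to $d\,\Ball$) and argue differently.

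Second, and more seriously, the ``promotion'' paragraph is a plan, not a proof. You yourself note that infinitely many convex sets, each covering all but a tiny volume of a fixed body, can have intersection of arbitrarily small volume; the cap-discretization and pigeonhole you propose do not by themselves rule this out, since a single color class can realize many cutting patterns simultaneously. In the actual argument of \cite{damasdi2021colorful}, the extra $2d-1$ colors beyond $d+1$ are used in a structured way (via the John decomposition of the contact points of $P$) to pin down $d$ directions and then run a colorful selection argument on the remaining colors; it is this structural use of the colors, not a generic pigeonhole, that closes the gap. Your outline captures the right opening move (minimum-volume witness plus John rounding) but is missing the mechanism that turns per-set control into control of an entire color class.
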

	
	The \emph{Fractional Helly Theorem}  
	due to Katchalski and Liu \cite{KL79} (see also \cite[Chapter 8]{MR1899299})
	is another classical Helly-type result, which states the following. 
	\label{page:fracHelly}
	\emph{Fix a dimension $d$, and an $\alpha\in(0,1)$, and let $\C$ be a finite 
		family of convex sets in $\Red$ with the property that among the subfamilies of 
		$\C$ of size $d+1$, there are at least $\alpha \binom{|\C|}{d+1}$ for whom the 
		intersection of the $d+1$ members is nonempty. Then, there is a subfamily 
		$\C^\prime\subset\C$ of size $|\C^{\prime}|\geq\frac{\alpha}{d+1} |\C|$ such 
		that the intersection of all members of $\C^\prime$ is nonempty.}
	
	In \cite{jung2022quantitative}, the following quantitative variant of the Fractional Helly 
	Theorem is shown.
	
	\begin{theorem}[Jung and Naszódi \cite{jung2022quantitative}]\label{thm:QFH}
		For every dimension $d \geq 1$ and every $\alpha\in(0,1)$, there is a 
		$\beta\in(0,1)$ such that the following holds.
		
		\noindent Let $\C$ be a finite family of convex sets in $\Red$. Assume that among all 
		subfamilies of size $3d+1$, there are at least $\alpha \binom{|\C|}{3d+1}$ for 
		whom the intersection of the $3d+1$ members is of volume at least one.
		
		\noindent Then, there is a subfamily $\C^\prime\subset\C$ of size at least 
		$\beta |\C|$ such that $\vol{\bigcap\limits_{C\in \C^\prime}C}\geq d^{-cd^2}$ with a universal constant $c>0$.
	\end{theorem}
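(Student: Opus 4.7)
The plan is to cast the statement into the hypergraph-chain framework introduced in the paper and invoke the main result, Theorem~\ref{thm:Qholmsen}, using Theorem~\ref{thm:QCH} as the colorful input. On the vertex set $\C$ I would build the chain $\Hch$ in which $\Hc_\ell$ collects those subfamilies $S\subseteq\C$ whose intersection has volume at least $v^\ell$ for a suitably chosen base $v$ (to be matched to Definition~\ref{def:QCHChain}). The hypothesis then says precisely that at least $\alpha\binom{|\C|}{3d+1}$ of the $(3d+1)$-subfamilies are edges of $\Hc_0$, so the chain has $(3d+1)$-tuple density $\alpha$ at level $0$. Monotonicity under containment is automatic: shrinking $S$ only enlarges $\bigcap_{C\in S} C$, so each $\Hc_\ell$ is downward closed.

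Next, I would verify that $\Hch$ satisfies the quantitative colorful Helly property required by Theorem~\ref{thm:Qholmsen}. Given any $3d$ color classes $\Ci_1,\dots,\Ci_{3d}$ all of whose colorful transversals lie in $\Hc_0$, i.e.\ have intersection volume at least $1$, Theorem~\ref{thm:QCH} produces an index $j$ with $\vol{\bigcap_{C\in\Ci_j} C}\geq d^{-cd^2}$, so $\Ci_j$ is an edge of $\Hc_{\ell_0}$ for a level $\ell_0$ that depends only on $d$ (through the scaling $v$). This is exactly the controlled drop of level that the colorful chain axiom formalizes.

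With both axioms checked, Theorem~\ref{thm:Qholmsen} then returns a subfamily $\C'\subseteq\C$ of size at least $\beta|\C|$, for some $\beta=\beta(\alpha,d)\in(0,1)$, forming an edge of $\Hc_{\ell_0}$; unpacking the chain, $\vol{\bigcap_{C\in\C'} C}\geq d^{-cd^2}$, which is the desired conclusion of Theorem~\ref{thm:QFH}.

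The main obstacle is not in this setup but inside Theorem~\ref{thm:Qholmsen}, which I am taking for granted. The delicate point is to convert a global density of heavy $(3d+1)$-tuples into a genuine colorful configuration of $3d$ classes whose transversals are all heavy, so that Theorem~\ref{thm:QCH} can actually be applied. The natural route is to first average down to a dense $3d$-uniform heavy sub-hypergraph, then extract $3d$ disjoint linear-sized subfamilies via a Kalai--Meshulam or Holmsen-type selection that guarantees colorful completeness, while tracking volumes in the chain so that only a bounded number of levels is lost. The dependence of $\beta$ on $\alpha$ and $d$ will come from the selection parameters in this extraction, and ensuring that the extraction does not collapse the volume bounds is the technical heart of the chain framework.
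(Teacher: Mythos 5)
Your route is exactly the paper's route to Corollary~\ref{cor:QFH3d}, not to Theorem~\ref{thm:QFH}. The paper does not prove Theorem~\ref{thm:QFH} here at all; it is cited from \cite{jung2022quantitative}. What your proposal reproduces is the derivation of Corollary~\ref{cor:QFH3d}: set up $\Qd(v^\ell)$ with $v=d^{-cd^2}$, observe that Theorem~\ref{thm:QCH} gives Colorful Helly Number $3d$ for this chain, and apply Theorem~\ref{thm:Qholmsen}. This does give Fractional Helly Number $3d$ (hence also $3d+1$ by averaging down the downwards-closed hypergraph), matching the tuple-size in the statement.

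The genuine gap is in the volume constant. You assert at the end that the chain ``unpacks'' to $\vol{\bigcap_{C\in\C'}C}\geq d^{-cd^2}$, but Theorem~\ref{thm:Qholmsen} only produces an edge of $\Hc_{\ell+(k+1)}$ with $k=3d$, i.e.\ an intersection of volume at least $v^{3d+1}=\bigl(d^{-cd^2}\bigr)^{3d+1}\approx d^{-c'd^3}$, not $d^{-cd^2}$. This is precisely the price the paper acknowledges when it says Corollary~\ref{cor:QFH3d} improves the Helly number ``at the expense of a bigger loss of volume.'' To get the claimed $d^{-cd^2}$ in Theorem~\ref{thm:QFH} one needs the separate, more geometric argument of \cite{jung2022quantitative}, which avoids iterating the colorful step $k+1$ times. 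Also, your last paragraph worries about converting density of heavy $(3d+1)$-tuples into $3d$ colorful configurations as an extra ``delicate point,'' but this is not a separate obstacle: that extraction is carried out entirely inside the proof of Theorem~\ref{thm:Qholmsen} (via Lemma~\ref{lem:kim} and Lemma~\ref{lem:newcolorclass}), so once you invoke that theorem there is nothing further to prove --- except the volume bookkeeping, which is exactly where your write-up goes wrong.
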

	
	In Theorems~\ref{thm:QH}, \ref{thm:QCH} and \ref{thm:QFH}, the cardinalities $2d$, $3d$ and $3d+1$ appear, respectively. It is easy to verify (cf. \cite{BKP82}) that Theorem~\ref{thm:QH} does not hold with any number below $2d$, which implies the same lower bound for the other two theorems. No better lower bounds are known. 
	
	Turning to abstract results, we describe Helly's Theorem and the Fractional and  Colorful Helly Theorems in the language of hypergraphs.
	Let $V$ be a (possibly infinite) set. A \emph{hypergraph} on the base set $V$ is 
	any family of its subsets, $\Hc \subset 2^V$. A hypergraph is downwards closed, if $H \in \Hc$ and $G \subset H$ implies $G \in \Hc$. A downwards closed hypergraph $\Hc$ has \emph{Helly Number} $h$, if for every finite subset $S \subset V$ the relation $\binom{S}{h} \subset \Hc$ implies $S \in \Hc$. Now let us denote the family of convex sets of $\Red$ as $\cvx$ and the hypergraph which contains the subfamilies of convex sets with nonempty intersection by $\Kd = \{\C \subset \cvx: \cap_{C\in \C}C \neq \emptyset\}$. Helly's Theorem says that $\Kd$ has Helly-number $d+1$.
	
	A downwards closed hypergraph $\Hc$ over a base set $V$ has \emph{Fractional Helly Number} $k$, if there exists a function $\beta: (0,1) \to (0,1)$ such that whenever $S \subset V$ is a finite subset such that $\left |\Hc \cap \binom{S}{k} \right |$, the number of edges of $\Hc$ of size $k$ in $S$ is at least $\alpha \binom{|S|}{k}$ with an $\alpha \in (0,1)$, then there exists a subset $S^\prime \subset S$ of size at least $\beta |S|$ such that $S^\prime \in \Hc$. The Fractional Helly Theorem says, that $\Kd$ has Fractional Helly Number $d+1$.
	
	We turn to phrasing the  Colorful Helly Theorem in an abstract setting.
	Let $S_1, \ldots, S_k\subset V$ be (not necessarily disjoint) subsets of a base set $V$, which we will call color classes.
	We call a set $F\subset V$ a colorful selection from these color classes, if
	$F$ contains one element from each color class. Very formally, to clarify how elements belonging to multiple color classes are handled, we say that $F\subset V$ is a \emph{colorful selection}, if there is a surjective map $\phi:[k]\longrightarrow F$ with $\phi(i)\in S_i$ for all $i\in [k]$. We denote the set of colorful selections by
	$S_1\colortimes\dots\colortimes S_k$.
	
	A downwards closed hypergraph $\Hc$ over a base set $V$ has \emph{Colorful Helly Number} $k$, if for every $k$ finite subset $S_1, \ldots, S_k \subset V$ such that $(S_1\colortimes\dots\colortimes S_k)\subseteq \Hc$, there exists a color class $S_j$ with $S_j \in \Hc$. The Colorful Helly Theorem says that $\Kd$ has  Colorful Helly Number $d+1$.
	
	Alon, Kalai, Matou{\v s}ek and Meshulam \cite{Alon02} considered Helly-type 
	results in the abstract setting. They showed, that if a hypergraph has bounded Fractional Helly Number, then it also has the 
	so called $(p,q)$ property (see the definition in \cite{Alon02}). Holmsen 
	\cite{holmsen2020large} showed that if a hypergraph has Colorful Helly Number $k$, then it has Fractional Helly Number at most $k$. In this sense, the Fractional Helly Theorem can be deduced from the Colorful Helly Theorem with a purely combinatorial proof. Note that Holmsen's result does not immediately imply a similar relationship between Theorem~\ref{thm:QCH} and Theorem~\ref{thm:QFH}, because there are two different kinds of intersection of convex sets (sets intersecting in volume one and sets intersecting in volume $d^{-cd^2}$). 
	
	In Section~\ref{sec:chains}, we introduce the notion of hypergraph chains, and our main results, Theorems~\ref{thm:Qholmsen} and \ref{thm:stabHelly}, which state that Holmsen's results extend to hypergraph chains. As a result, they can be applied in the context of quantitative Helly-type questions. The proof of Theorems~\ref{thm:Qholmsen} and \ref{thm:stabHelly} are contained in Section~\ref{sec:proofs}. In Section~\ref{sec:geometric} we show geometric consequences of our main results. Our Theorem~\ref{thm:Qholmsen} implies (see Corollary~\ref{cor:QFH3d}) that in Theorem~\ref{thm:QFH} we can decrease the number $3d+1$ to $3d$ (at the expense of a bigger loss of volume). Our second main result, Theorem~\ref{thm:stabHelly} implies that Theorem~\ref{thm:QH} is stable: one does not need to check that all $2d$-tuples of the given convex sets have intersection of volume at least one. Instead, it is sufficient to verify it for almost all of them to obtain that almost all have an intersection of some positive volume. Finally, in Section~\ref{sec:rem} we state open questions.
	
	Quantitative Helly-type theorems are considered in \cite{MR3602856,RoSo17,sarkar2021quantitative} with the focus on convex sets in $\Red$, or the lattice $\mathbb{Z}^d$, 
	or sets in topological spaces with particular topological properties. To our 
	knowledge, ours is the first attempt to address quantitative Helly-type questions in the general 
	context of hypergraphs in the spirit of the results of \cite{Alon02} and 
	\cite{holmsen2020large}.
	
	\section{Hypergraph Chains}\label{sec:chains}
	
	\begin{definition}\label{def:Hchain}
		Let $V$ be a (possibly infinite) set. The infinite sequence $\Hch$ of hypergraphs over the base set $V$ is a \emph{hypergraph chain}, if every $\Hc_\ell$ is downwards closed and for all $\ell \in \Zb$, $\Hc_\ell \subset \Hc_{\ell + 1}$.
	\end{definition}
	
	If $V = \cvx$ and $\Hc_\ell = \Kd$ for all $\ell$, then $\Hch$ is a hypergraph chain. A more interesting example is when $V = \cvx$, $v \in (0,1)$ a real number and for an $\ell \in \Zb$, a family of convex sets from $\Red$ is an edge in $\Hc_\ell$, if and only if their intersection is of volume at least $v^\ell$. We will denote this hypergraph by $\Quanthyp$.
	
	\begin{definition}\label{def:QHellyChain}
		A hypergraph chain $\Hch$ over a base set $V$ has \emph{Helly Number} $h$, if for every $S \subseteq V$, $\binom{S}{h} \subset \Hc_\ell$ implies $S \in \Hc_{\ell + 1}$.
	\end{definition}
	
	According to this definition, $(\Kd)_{\ell \in \mathbb{Z}}$ has Helly Number $d+1$.
	
	More interestingly, Theorem~\ref{thm:QH} states that if $v \approx d^{-3d/2}$, then $\Quantchain$ has Helly Number $2d$.

	\begin{definition}\label{def:QCHChain}
		A hypergraph chain $\Hch$ over a base set $V$ has \emph{Colorful Helly Number} $k$, if whenever $S_1, \ldots, S_k$ are finite subsets (color classes) of $V$ and $S_1 \colortimes \ldots \colortimes S_k \subset \Hc_\ell$, then there is a color class $S_j$ with $ S_j \in \Hc_{\ell + 1}$.
	\end{definition}
	
	Note that by taking $S_1 = S_2 = ... = S_k = S$, a hypergraph chain with Colorful Helly Number $k$ has Helly Number $h \leq k$.
	
	According to the definition, $(\Kd)_{\ell \in \mathbb{Z}}$ has Colorful Helly Number $d+1$.
	
	More interestingly, the Quantitative Colorful Helly Theorem, 
	Theorem~\ref{thm:QCH} may be stated as follows. If $v = d^{-cd^2}$ from Theorem~\ref{thm:QCH}, then $\Quantchain$ has Colorful Helly Number $3d$.
	
	\begin{definition}\label{def:QFHChain}
		A hypergraph chain $\Hch$ over a base set $V$ has \emph{Fractional Helly Number} $k$, if there exists a function $\beta : (0,1) \to (0,1)$ such that for every finite set $S \subset V$, if $|\Hc_\ell \cap \binom{S}{k}| \geq \alpha \binom{|S|}{k}$ with some $\alpha \in (0,1)$, then there exists an $S' \subset S$ with $|S'| \geq \beta (\alpha) |S|$ and $S' \in \Hc_{\ell + 1}$.
	\end{definition}
	
	As in the previous two cases, $(\Kd)_{\ell \in \mathbb{Z}}$ has Fractional Helly Number $d+1$ and Theorem~\ref{thm:QFH} states, that if $v = d^{-cd^2}$ from Theorem~\ref{thm:QFH}, then $\Quantchain$ has Fractional Helly Number $3d+1$.
	
	Now we are ready to state our main result, which is a quantitative analogue of Theorem 3 from \cite{holmsen2020large}.
	
	\begin{theorem}\label{thm:Qholmsen}
		If the hypergraph chain $\Hch$ has Colorful Helly Number $k$, then $(\Hc_{(k+1)\ell})_{\ell \in \Zb}$ has Fractional Helly Number $k$.
	\end{theorem}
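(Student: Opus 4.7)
The strategy is to transcribe Holmsen's argument~\cite{holmsen2020large} that Colorful Helly Number $k$ implies Fractional Helly Number $k$, keeping careful track of how each invocation of the colorful hypothesis shifts the chain index. Given a finite $S \subset V$ with $|\Hc_{(k+1)\ell} \cap \binom{S}{k}| \geq \alpha \binom{|S|}{k}$, the first step is to extract a large complete $k$-partite sub-configuration: disjoint subsets $T_1,\ldots,T_k \subset S$ such that every colorful selection from $T_1 \colortimes \cdots \colortimes T_k$ is an edge of $\Hc_{(k+1)\ell}$. This is the usual hypergraph extremal step (an Erdős-box / Kővári–Sós–Turán argument for $k$-uniform hypergraphs), which yields $|T_i| \geq |S|^{\eta(\alpha,k)}$ with some positive exponent, together with explicit density-loss bounds.

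Applying the Colorful Helly Number $k$ hypothesis of $\Hch$ to $(T_1,\ldots,T_k)$ gives some color class $T_{j_0}$ with $T_{j_0} \in \Hc_{(k+1)\ell+1}$. This is only a sub-linear edge, so an amplification procedure is needed. Here I would recurse on link structures: any vertex $v \in S$ whose link in the initial $k$-edge system meets $T_{j_0}$ densely can be inserted into a new colorful configuration of smaller arity, which by the Colorful Helly condition (applied now to the sub-chain at level $\Hc_{(k+1)\ell+1}$) produces an enlarged edge one level higher in the chain. Repeating this at most $k$ times upgrades the sub-linear edge to a linear-sized set $S'\subseteq S$, with exactly one chain-index increase per iteration. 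Combined with the initial box-extraction step, this gives in total $k+1$ level shifts, landing at $\Hc_{(k+1)\ell+(k+1)}=\Hc_{(k+1)(\ell+1)}$, as required; the function $\beta(\alpha)$ is then the product of the density losses incurred at the $k+1$ stages.

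The main obstacle is the amplification step, and in particular pinning the level budget to exactly $k+1$. The Erdős-box extraction only produces sub-polynomial-in-$|S|$ color classes, so the weight of the proof is in the iterated growth phase; to avoid an extra factor of $k$ (or worse), the iteration must be designed so that each step consumes exactly one level of the chain. I expect the cleanest way to arrange this is by induction on $k$: the base case $k=2$ is a straightforward density-to-edge argument using one colorful application, and for the inductive step one applies the induction hypothesis to the $(k-1)$-uniform link of a well-chosen vertex, which corresponds to the $(k-1)$-arity colorful Helly condition within the chain $(\Hc_{\ell+1})_{\ell\in\Zb}$. Matching Holmsen's constants in this inductive scheme will be the most delicate bookkeeping in the proof.
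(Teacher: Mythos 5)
The paper's proof is a \emph{contradiction} argument in the style of Gy\'arf\'as--Hubenko--Solymosi/Kim, not a direct construction, and your plan diverges from it in ways that introduce genuine gaps. The paper fixes $\mathcal{F}_k=\Hc_\ell\cap\binom{S}{k}$, assumes $\omega_k(\Hc_{\ell+k}|_S)<\beta n$, and then repeatedly applies Lemma~\ref{lem:newcolorclass} to peel off disjoint missing $k$-edges $M_1,\dots,M_{k-1}\notin\Hc_{\ell+1}$ while keeping a density-$\alpha_i$ family $\mathcal{F}_i$ of $i$-sets whose extensions by transversals of $M_1,\dots,M_{k-i}$ land back in $\mathcal{F}_k\subset\Hc_\ell$. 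After $k-1$ rounds it finds a $k$-th missing edge $M_k$ inside $V(\mathcal{F}_1)$, and the $k$ color classes $M_1,\dots,M_k$ then contradict Colorful Helly Number $k$: every colorful selection is in $\Hc_\ell$, yet no $M_i\in\Hc_{\ell+1}$. The $k+1$ level shift comes out cleanly because each lemma application moves one level and the final contradiction uses one more.

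Your proposal has three concrete problems. First, the Erd\H{o}s-box/K\H{o}v\'ari--S\'os--Tur\'an extraction does not yield parts of size $|S|^{\eta}$: for a $k$-uniform hypergraph of constant density the box theorem gives complete $k$-partite configurations with parts only of polylogarithmic size in $|S|$, so already the starting point is far smaller than claimed. Second, the amplification step is asked to bridge from that polylogarithmic (or even hypothetical polynomial) size up to a linear $\beta|S|$, but you budget only $k$ iterations each gaining one chain level; even setting aside that the mechanism is unspecified, $k$ constant-factor steps cannot close a polynomial multiplicative gap, so the quantitative accounting cannot work as stated. Third, the inductive scheme rests on the claim that the $(k-1)$-uniform link of a vertex inherits a Colorful Helly Number $k-1$ within the shifted chain; this is not a consequence of Definition~\ref{def:QCHChain} and is false even in the motivating geometric example (intersecting all sets with a fixed convex body still gives convex sets in $\Red$, with Colorful Helly Number $d+1$, not $d$). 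You yourself flag the amplification step as the ``main obstacle,'' and indeed it is where the proposal stops being a proof. The way around all three issues is to abandon the direct-construction template and argue by contradiction as in Lemma~\ref{lem:kim} and Lemma~\ref{lem:newcolorclass}: if no large edge of $\Hc_{\ell+k+1}$ exists, build $k$ disjoint color classes of missing edges and invoke the colorful hypothesis against them.
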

	
	Here, the obtained Fractional Helly Number is the same as the assumed Colorful Helly Number, but not for the exact same hypergraph chain: we can only take every $(k+1)$th element from the original chain. Can the Fractional Helly number go below the Colorful Helly number? If for a hypergraph chain the Helly Number is smaller than the Colorful Helly Number, the answer is a partial yes.
	
	\begin{theorem}\label{thm:stabHelly}
		If the hypergraph chain $\Hch$ has Helly Number $h$ and Colorful Helly Number $k\geq h$, then there exists a function $\beta: (0,1) \to [0,1)$ with $\lim_{\alpha \to 1} \beta ( \alpha) = 1 $ such that for every finite set $S \subset V$, if $|\Hc_\ell \cap \binom{S}{h}| \geq \alpha \binom{|S|}{h}$ with some $\alpha \in (0,1)$, then there exists an $S' \subset S$ with $|S'| \geq \beta (\alpha) |S|$ and $S' \in \Hc_{\ell+3}$.
	\end{theorem}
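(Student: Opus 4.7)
Set $\epsilon = 1 - \alpha$ and $n = |S|$. The plan is to spend two of the three available level-increases on the Helly Number and one on the Colorful Helly Number (with one level of slack), so I split the argument into a density-upgrade stage and a color-class extraction stage.

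For the density upgrade, I convert $\alpha$-density of $h$-edges at level $\ell$ into near-total density of $k$-edges at level $\ell+1$. Each of the at most $\epsilon\binom{n}{h}$ non-edges in $\Hc_\ell\cap\binom{S}{h}$ is contained in $\binom{n-h}{k-h}$ many $k$-subsets, so at most $\binom{k}{h}\epsilon\binom{n}{k}$ of the $k$-subsets of $S$ contain some $h$-non-edge. The remaining $\left(1-\binom{k}{h}\epsilon\right)\binom{n}{k}$ many $k$-subsets have all $h$-subfaces in $\Hc_\ell$ and so lie in $\Hc_{\ell+1}$ by the Helly Number assumption.

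For the color-class extraction stage, I aim to construct subsets $T_1,\dots,T_k\subseteq S$, each of size at least $(1-g(\epsilon))n$ for some $g$ with $g(\epsilon)\to 0$ as $\epsilon\to 0$, such that every colorful $k$-selection from $T_1,\dots,T_k$ lies in $\Hc_{\ell+1}$. Once this is achieved, the Colorful Helly Number assumption yields some $T_j\in \Hc_{\ell+2}\subseteq \Hc_{\ell+3}$, and setting $S' = T_j$ gives $|S'|\geq (1-g(\epsilon))n$, establishing the theorem with $\beta(\alpha) = 1 - g(1-\alpha)$.

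The main obstacle is the second stage. A naive approach — take a random equipartition and delete vertices covering all bad colorful selections — yields $|T_i|$ of the order $\epsilon^{-1/(k-1)}$ in the worst case, which is a constant independent of $n$, because the $k$-uniform hypergraph of non-edges at level $\ell+1$ can have arbitrarily spread-out structure. The extra leverage comes from the fact that every $k$-non-edge at level $\ell+1$ must contain an $h$-non-edge at level $\ell$ (the contrapositive of Helly), so the non-edges are concentrated around the at most $\epsilon\binom{n}{h}$ small non-edges. Iteratively applying the Colorful Helly Number hypothesis in the spirit of Holmsen's argument in \cite{holmsen2020large}, I expect to bound the vertex cover of the non-edge hypergraph by $g(\epsilon)\cdot n$ with $g(\epsilon)\to 0$, which completes the construction.
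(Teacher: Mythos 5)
Your first stage is sound: each $h$-non-edge of $\Hc_\ell$ lies in $\binom{n-h}{k-h}$ members of $\binom{S}{k}$, and since $\binom{n}{h}\binom{n-h}{k-h}=\binom{k}{h}\binom{n}{k}$, at most $\binom{k}{h}\epsilon\binom{n}{k}$ of the $k$-subsets contain an $h$-non-edge, while the rest lie in $\Hc_{\ell+1}$ by the Helly property. The gap is in the second stage, and it looks fatal to the approach as formulated. You want color classes $T_1,\dots,T_k\subset S$, each of size at least $(1-g(\epsilon))n$, with $T_1\colortimes\dots\colortimes T_k\subset\Hc_{\ell+1}$. But since every $T_i$ has size $(1-g(\epsilon))n$, the common intersection $T=\bigcap_i T_i$ has size at least $(1-kg(\epsilon))n$, and every $k$-subset of $T$ is already a colorful selection. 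So the Colorful Helly premise you need forces $\binom{T}{k}\subset\Hc_{\ell+1}$, i.e.\ a $k$-clique of linearly almost-full size --- which is exactly the kind of large-clique conclusion the theorem is reaching for, so the hard work is hidden inside constructing the $T_i$. Density $1-\binom{k}{h}\epsilon$ of $k$-edges does not yield a clique of size $(1-o(1))n$, and the observation that every $k$-non-edge of $\Hc_{\ell+1}$ contains an $h$-non-edge of $\Hc_\ell$ does not repair this: the $\epsilon\binom{n}{h}$ bad $h$-tuples can be spread so that no $o(n)$-sized vertex deletion eliminates them. You also cannot escape by letting some $T_i$ be small, since Colorful Helly only promises that \emph{some} $T_j$ is an edge, with no control over which $j$, so all color classes must be large for the conclusion to be useful.

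The paper's proof avoids this by arguing in the contrapositive via the maximal-disjoint-family trick of Lemma~\ref{lem:kim}\ref{item:hlessk}. Take a maximal family of $t$ pairwise disjoint missing $h$-edges of $\Hc_{\ell+2}$; maximality gives an $h$-clique of size $n-th$ in $\Hc_{\ell+2}$ and hence (by Helly) an edge of that size in $\Hc_{\ell+3}$. Separately, any $k$ of these disjoint missing edges serve as color classes none of which is in $\Hc_{\ell+2}$, so the contrapositive of Colorful Helly yields a colorful $k$-non-edge of $\Hc_{\ell+1}$, and the contrapositive of Helly extracts from it an $h$-non-edge of $\Hc_\ell$; a counting argument over the $\binom{t}{k}$ index choices produces at least $\binom{k}{h}^{-1}\binom{t}{h}$ distinct such $h$-non-edges. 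Hence if the $h$-non-edges at level $\ell$ are rare, $t$ must be small and $n-th$ close to $n$, giving the theorem. The combinatorial idea you are missing is precisely this conversion of \emph{absence of a large clique} into \emph{many disjoint non-edges}, rather than trying to upgrade high edge-density directly into a large clique.
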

	We can interpret this result as a stability version of the Helly property (under some additional assumptions), since $\lim_{\alpha \to 1} \beta ( \alpha) = 1$.
	
	As far as we know, the best possible $\beta$ here might assign $0$ to a large fraction of $\alpha$s from $(0,1)$, this is the difference from hypergraph chains with Fractional Helly Number $h$, where this is not possible. But at least, if $\alpha$ is very close to $1$, then $\beta(\alpha)$ is also close to $1$.

	\section{Proof of Theorems~\ref{thm:Qholmsen} and \ref{thm:stabHelly}}\label{sec:proofs}
	
	Let us begin with an analogue of Lemma 3.1 from \cite{holmsen2020large}. We denote by $\omega_h(\Hc_\ell|_S)$ the size of the largest $h$-clique of $S$, ie. the size of the largest subset $K \subset S$ such that $\binom{K}{h} \subset \Hc_\ell$.
	
	\begin{lemma}\label{lem:kim}
		Let $\Hch$ be a hypergraph chain with Helly Number $h$ and Colorful Helly Number $k$ over a base set $V$. Then for every finite subset $S \subset V$, we have
		\begin{enumerate}[label=(\alph*)]
			\item\label{item:heqk} 
			\hspace{.02\textwidth}$\left |\binom{S}{k} \setminus \Hc_\ell \right | \geq \binom{\frac{1}{k}(|S| - \omega_k (\Hc_{\ell + 1}|_S))}{k}$, and
			\item\label{item:hlessk} 
			\hspace{.02\textwidth}$\left |\binom{S}{h} \setminus \Hc_\ell \right | \geq  \binom{k}{h}^{-1}\binom{\frac{1}{h}(|S| - \omega_h (\Hc_{\ell + 2}|_S))}{h}$.
		\end{enumerate}
	\end{lemma}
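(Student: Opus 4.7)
Plan. The statement is a chain-version of Lemma~3.1 of \cite{holmsen2020large}; the plan in both parts is to fix a maximum clique, partition its complement into equal parts, and apply the Colorful Helly property of the chain to produce non-edges, carefully tracking the level shift of the chain at each step.

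\textbf{Part (a).} Let $\omega := \omega_k(\Hc_{\ell+1}|_S)$ and fix a maximum $k$-clique $K\subseteq S$ of $\Hc_{\ell+1}$ of size $\omega$. Set $T:=S\setminus K$ and $t:=(|S|-\omega)/k$, and partition $T$ into $k$ parts $T_1,\dots,T_k$ of size approximately $t$. I apply the Colorful Helly property of the chain to the color classes $T_1\cup K,\dots,T_k\cup K$: each class has size $t+\omega$, and were one of them in $\Hc_{\ell+1}$, the downwards-closedness of $\Hc_{\ell+1}$ would make it a $k$-clique of size $t+\omega>\omega$, contradicting the maximality of $K$. Hence some colorful selection lies outside $\Hc_\ell$. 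To upgrade this single non-edge to the promised count of $\binom{t}{k}$ non-edges, I would iterate the argument on successively shrinking sub-classes $T_i'\subseteq T_i$, tracking the support of each extracted non-edge to guarantee distinctness; equivalently, and perhaps more cleanly, I would try to produce a ``$k$-anticlique'' $U\subseteq T$ of size $t$, i.e.\ a set whose every $k$-subset is outside $\Hc_\ell$, which automatically yields $\binom{|U|}{k}=\binom{t}{k}$ non-edges.

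\textbf{Part (b).} Here the Helly number $h$ enters alongside the Colorful Helly number $k$. Let $\omega^*:=\omega_h(\Hc_{\ell+2}|_S)$ and fix a maximum $h$-clique $K^*$ of $\Hc_{\ell+2}$ of size $\omega^*$. Partition $T^*:=S\setminus K^*$ into $h$ parts $T_1^*,\dots,T_h^*$ of size $(|S|-\omega^*)/h$ each, and apply the Colorful Helly property to the $k$ color classes $T_1^*,\dots,T_h^*,K^*,\dots,K^*$ (padded with $k-h$ copies of $K^*$). If all colorful selections lay in $\Hc_\ell$, some $T_i^*$ would be in $\Hc_{\ell+1}$, whereupon applying the Helly property at levels $\ell+1,\ell+2$ would force $T_i^*$ to be an $h$-clique of $\Hc_{\ell+2}$ of size strictly greater than $\omega^*$, a contradiction. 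The factor $\binom{k}{h}^{-1}$ in the bound arises from the $\binom{k}{h}$ ways of distributing the $h$ distinguished color classes among the $k$ positions: one loses this factor when the extracted non-edges are re-interpreted intrinsically as $h$-subsets of $S$ rather than as colorful selections in the padded $k$-class setup.

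The main technical obstacle is the counting in part (a): Colorful Helly is an existence statement about a single non-edge, whereas the lemma demands $\binom{t}{k}$ of them. Bridging this gap requires the careful combinatorial accounting that is the technical heart of the Kim-style argument (either the iterative shrinking or the explicit anticlique construction above). A secondary obstacle in part (b) is the case where the Colorful Helly conclusion is ``$K^*\in\Hc_{\ell+1}$'' rather than ``$T_i^*\in\Hc_{\ell+1}$'', since the former does not immediately contradict the maximality of $K^*$ as an $h$-clique of $\Hc_{\ell+2}$; this case needs a separate reduction, for instance by shrinking $K^*$ in the padding or by running the argument with $K^*$ replaced by a strictly smaller clique.
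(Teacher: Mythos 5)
Your decomposition is not the one the paper uses, and the gap you flag at the end is genuine and not easily bridged by the route you sketch. The paper does not fix a maximum clique and partition its complement. Instead, for part (a) it fixes a maximal family $\{M_1,\dots,M_t\}$ of pairwise \emph{disjoint} $k$-sets lying outside $\Hc_{\ell+1}$; maximality forces $S\setminus(M_1\cup\dots\cup M_t)$ to be a $k$-clique of $\Hc_{\ell+1}$, giving $t\geq\frac1k\bigl(|S|-\omega_k(\Hc_{\ell+1}|_S)\bigr)$. Then each $I\in\binom{[t]}{k}$ furnishes the $k$ color classes $\{M_i:i\in I\}$, none of which lies in $\Hc_{\ell+1}$; Colorful Helly (applied at levels $\ell,\ell+1$) hands you a rainbow transversal $\{v_i:i\in I\}\notin\Hc_\ell$, and crucially, because the $M_i$ are disjoint, this transversal is a genuine $k$-set and distinct $I$'s give distinct transversals. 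The count $\binom{t}{k}$ thus falls out immediately. Part (b) is the same with $h$-sets outside $\Hc_{\ell+2}$: Colorful Helly gives a $k$-transversal outside $\Hc_{\ell+1}$, the Helly property extracts an $h$-subset of it outside $\Hc_\ell$, and the $\binom{k}{h}^{-1}$ is the overcount of $I\in\binom{[t]}{k}$ mapping to the same $h$-element index set $J\in\binom{[t]}{h}$.

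Your maximum-clique-plus-partition setup has three problems the paper's construction avoids. First, your color classes $T_i\cup K$ all contain $K$, so a colorful selection may collapse to a set of size $<k$; you then only get a non-edge after extending, and you have no control over the extension. Second, and this is the obstacle you yourself name: one application of Colorful Helly produces \emph{one} non-edge, and neither the ``iterate on shrinking sub-classes'' plan nor the ``find a $k$-anticlique of size $t$'' plan is carried out --- and the anticlique version would actually be a much stronger statement than the lemma, with no reason to hold. The disjoint-missing-edges family is precisely what makes the counting automatic, and that is the idea your proposal is missing. Third, in part (b) the contradiction you aim for (that $T_i^*\in\Hc_{\ell+1}$ would give an $h$-clique larger than $\omega^*$) needs $\frac{|S|-\omega^*}{h}>\omega^*$, i.e. $|S|>(h+1)\omega^*$, which you have not assumed and which need not hold; and as you also note, the $K^*\in\Hc_{\ell+1}$ branch is left unresolved. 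The paper sidesteps both by never using a clique as a color class.
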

	
	\begin{proof}
		Note that $h\leq k$ holds for every hypergraph chain of Helly Number $h$ and Colorful Helly Number $k$. Fix $\ell \in \Zb$. 
		
		For the proof of part \ref{item:heqk}, let $\{M_1, \ldots, M_t\} \subset \binom{S}{k} \setminus \Hc_{\ell + 1}$ be a maximal size family of disjoint missing edges from $\Hc_{\ell + 1}$, each of size $k$. By the maximality of this family, we have $\binom{S \setminus (M_1 \cup \ldots \cup M_t)}{k} \subset \Hc_{\ell + 1}$, and thus, $\omega_k(\Hc_{\ell + 1}|_S) \geq  |S\setminus (M_1 \cup \ldots \cup M_t)| = |S| - tk$ or, equivalently,
		\begin{equation}\label{eq:boundontk}
			t \geq \frac{1}{k}((|S| - \omega_k(\Hc_{\ell + 1}|_S))).	 
		\end{equation}
		
		Consider a selection $I \in \binom{[t]}{k}$ of $k$ indices. Since each $M_i$ is a missing edge from $\Hc_{\ell + 1}$, we have that $\{M_i\st i\in I\}$ is a family of $k$ color classes, such that neither one is contained in $\Hc_{\ell + 1}$. Since $\Hch$ has Colorful Helly Number $k$,
		there is a colorful selection $\{v_i\st i\in I\}\subset V$ of vertices (that is, $v_i\in M_i$ for all $i\in I$) such that $\{v_i\st i\in I\}$ is not an edge in $\Hc_{\ell}$.
		
		Observe that if $I_1, I_2 \in \binom{[t]}{k}$ are distinct selections of indices, then, by the disjointness of the $M_j$, we have that	$\{v_i\st i\in I_1\}\neq \{v_i\st i\in I_2\}$.
		Thus, we found $\binom{t}{k}$ members of $\binom{S}{k} \setminus \Hc_{\ell}$, completing the proof of part \ref{item:heqk}.
		
		For the proof of part \ref{item:hlessk}, let $\{M_1, \ldots, M_t\} \subset \binom{S}{h} \setminus \Hc_{\ell + 2}$ be a maximal size family of disjoint missing edges from $\Hc_{\ell + 2}$, each of size $h$. Similarly to the argument in part \ref{item:heqk}, we have
		\begin{equation}\label{eq:boundonth}
			t \geq \frac{1}{h}((|S| - \omega_h(\Hc_{\ell + 2}|_S))).	 
		\end{equation}
		
		Consider a selection $I \in \binom{[t]}{k}$ of $k$ indices. Again, as in the proof of part \ref{item:heqk}, since $\Hch$ has Colorful Helly Number $k$,
		there is a colorful selection $\{v_i\st i\in I\}\subset V$ of vertices from the color classes $\{M_i\st i\in I\}$ such that $\{v_i\st i\in I\}$ is not an edge in $\Hc_{\ell + 1}$.
		By the Helly property, there is a $J \in \binom{I}{h}$ and an $F \in \binom{S}{h} \setminus \Hc_\ell$ such that $|F \cap M_j| = 1$ for every $j\in J$. Any fixed $J\in\binom{[t]}{h}$ can appear at most $\binom{t-h}{k-h}$ times in this way. Moreover, any fixed $F \in \binom{S}{h} \setminus \Hc_\ell$ may appear for only one $J$, so there are at least $\binom{t}{k}/\binom{t-h}{k-h} = \binom{t}{h}/\binom{k}{h}$ missing edges $F \in \binom{S}{h} \setminus \Hc_\ell$, which combined with \eqref{eq:boundonth} completes the proof of part \ref{item:hlessk} of Lemma~\ref{lem:kim}.
	\end{proof}
	
	\begin{proof}[Proof of Theorem~\ref{thm:stabHelly}]
		Fix $\ell \in \Zb$ and assume that the largest edge of $\Hc_{\ell +3}$ in $S$ is of size at most $(1-\varepsilon)|S|$ for some $\varepsilon > 0$. Since $\Hch$ has Helly Number $h$, this implies $\omega_h(\Hc_{\ell + 2}|_S) \leq (1 - \varepsilon)|S|$. Part \ref{item:hlessk} of Lemma~\ref{lem:kim} yields $\left | \binom{S}{h} \setminus \Hc_\ell \right | \geq \binom{k}{h}^{-1}\binom{\varepsilon |S|/h}{h} \geq \delta\cdot \binom{|S|}{h}$ with some $\delta=\delta(\varepsilon, k,h)>0$. Thus, if $\beta(\alpha)\leq1-\varepsilon$, then $\alpha\leq 1-\delta$, proving Theorem~\ref{thm:stabHelly}.
	\end{proof}
	
	In order to prove Theorem~\ref{thm:Qholmsen}, we need the following technical lemma, which is an analogue of Lemma~3.2 from \cite{holmsen2020large} and can be proved using part \ref{item:heqk} of Lemma~\ref{lem:kim}.
	
	\begin{lemma}\label{lem:newcolorclass}
		Let $\Hch$ be a hypergraph chain over a base set $V$ with Colorful Helly Number $k$. Let $S \subset V$ be a finite subset with $|S| = n$ large enough. If for a $t \in \Zb$ and $c \in (0,1)$ the inequality $\omega_k(\Hc_{t + 1}|_S) \leq cn/2$ holds, then given any $i \in [k]$ and a family $\mathcal{F}_i \subset \binom{S}{i}$ with $|\mathcal{F}_i| \geq c\binom{n}{i}$ there exists another family $\mathcal{F}_{i-1} \subset \binom{S}{i-1}$ and an $M \in \binom{S}{k} \setminus \Hc_t$ such that $|\mathcal{F}_{i-1}| \geq \left(\frac{c}{12k^2}\right)^k\binom{n}{n-1}$ and $A \cup \{v\} \in \mathcal{F}_i$ for all $A \in \mathcal{F}_{i-1}$ and $v \in M$.
	\end{lemma}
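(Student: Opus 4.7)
The plan is to pass to the codegrees of $\mathcal{F}_i$, apply part \ref{item:heqk} of Lemma~\ref{lem:kim} locally on each codegree neighborhood, and close with a double count controlled by Jensen's inequality. For each $A \in \binom{S}{i-1}$ I would set the codegree
\[
N(A) \;:=\; \{\, v \in S \st A \cup \{v\} \in \mathcal{F}_i\,\},
\]
so that $A \cap N(A) = \emptyset$. Counting flags $A \subset B \in \mathcal{F}_i$ in two ways gives $\sum_A |N(A)| = i |\mathcal{F}_i| \geq c i \binom{n}{i}$, so the average $\bar{x}$ of $|N(A)|$ over $A\in\binom{S}{i-1}$ satisfies $\bar{x} \geq c(n-i+1) \geq \tfrac{3}{4}cn$ once $n$ is large compared to $k$.

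Next, for each $A$ I would apply part \ref{item:heqk} of Lemma~\ref{lem:kim} with $\ell = t$ to the restriction of the chain to the vertex set $N(A)$. Since $\omega_k(\Hc_{t+1}|_{N(A)}) \leq \omega_k(\Hc_{t+1}|_S) \leq cn/2$, the lemma yields $|\binom{N(A)}{k} \setminus \Hc_t| \geq \binom{(|N(A)|-cn/2)/k}{k}$, and every missing edge $M$ extracted this way is automatically disjoint from $A$ and satisfies $A \cup \{v\} \in \mathcal{F}_i$ for every $v \in M$ -- the structural property required of the eventual pair $(M, \mathcal{F}_{i-1})$. To prepare for Jensen's inequality, I would replace the right-hand side by the convex non-negative envelope
\[
\phi(x) \;:=\; \bigl(\max\{0,\,(x - cn/2)/k - k + 1\}\bigr)^k \!/\, k!,
\]
valid by the elementary bound $\binom{y}{k} \geq (\max(0, y - k + 1))^k / k!$.

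Summing over $A$ and invoking Jensen on the convex $\phi$ gives $\sum_A |\binom{N(A)}{k} \setminus \Hc_t| \geq \binom{n}{i-1}\, \phi(\bar{x}) \geq \binom{n}{i-1}\, (cn/(8k))^k/k!$, using $\bar{x} \geq \tfrac{3}{4}cn$ together with $n$ so large that $cn/(4k) - k + 1 \geq cn/(8k)$. Rewriting the sum as $\sum_{M} |\{A \st M \subset N(A)\}|$ over $M \in \binom{S}{k}\setminus\Hc_t$ and bounding $|\binom{S}{k}\setminus\Hc_t| \leq \binom{n}{k} \leq n^k/k!$, pigeonhole then produces some missing edge $M$ with
\[
\bigl|\{A \st M \subset N(A)\}\bigr| \;\geq\; \Bigl(\frac{c}{8k}\Bigr)^{\!k} \binom{n}{i-1} \;\geq\; \Bigl(\frac{c}{12k^2}\Bigr)^{\!k}\binom{n}{i-1};
\]
taking $\mathcal{F}_{i-1} := \{A \in \binom{S}{i-1} \st M \subset N(A)\}$ finishes the argument.

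The delicate step is the Jensen application. The raw bound $\binom{(|N(A)|-cn/2)/k}{k}$ is not convex in $|N(A)|$ for small values of the argument, so one cannot simply pass to averages. The convex envelope $\phi$ fixes this by agreeing with the raw bound up to constants in the useful regime $|N(A)| \gtrsim cn/2 + k^2$ and being identically zero below. The more naive alternative of first isolating a positive-density subset of $A$'s with $|N(A)|$ well above $cn/2$ via a Markov-type argument is simpler but costs an extra factor of $c$, yielding only a $c^{k+1}$-type bound in place of the claimed $c^k$.
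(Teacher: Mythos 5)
Your argument is essentially the paper's: pass to codegrees $\Gamma_A = N(A)$, double-count the pairs $(A,M)$ with $M \in \binom{\Gamma_A}{k}\setminus\Hc_t$, lower-bound each summand by Lemma~\ref{lem:kim}\ref{item:heqk} applied to $N(A)$ in place of $S$, apply Jensen, and pigeonhole over $M$. The one place where you go beyond the paper is a genuine (if small) improvement in rigor: the paper invokes Jensen directly on $y \mapsto \binom{(y-cn/2)/k}{k}$ without addressing that the real-variable binomial coefficient is not convex (nor even a valid lower bound for a nonnegative count) when its argument drops below $k-1$; your convex, nonnegative envelope $\phi(x) = \bigl(\max\{0,(x-cn/2)/k-k+1\}\bigr)^k/k!$, justified by $\binom{y}{k}\geq (y-k+1)^k/k!$ for $y\geq k-1$ and by the trivial bound $0$ otherwise, makes the Jensen step airtight and still recovers a constant $(c/(8k))^k$ that dominates the required $(c/(12k^2))^k$. (Both your bound and the paper's naturally produce $\binom{n}{i-1}$ on the right; the $\binom{n}{n-1}$ in the lemma statement is a typo.)
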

	
	\begin{proof}
		For every $A \in \binom{S}{i-1}$ let $\Gamma_A = \{v \in S: (A \cup \{v\}) \in \mathcal{F}_i\}$ and let \[\mathcal{P} = \left\{ (A,M): A \in \binom{S}{i-1}, M \in \binom{\Gamma_A}{k} \setminus \Hc_t \right\}.\]
		
		We want to lower bound $|\mathcal{P}|$. By part \ref{item:heqk} of Lemma~\ref{lem:kim}, for a fixed $A \in \binom{S}{i-1}$ there are at least $\binom{\frac{1}{k}\left( |\Gamma_A| - (c/2)n\right)}{k}$ distinct $M \in \binom{\Gamma_A}{k} \setminus \Hc_t$ such that $(A,M) \in \mathcal{P}$. Jensen's inequality gives
		
		\begin{align*}
			|\mathcal{P}| & \geq \sum_{A \in \binom{S}{i-1} } \binom{\frac{1}{k}\left( |\Gamma_A| - (c/2)n\right)}{k} \\ & \geq \binom{n}{i-1} \binom{\binom{n}{i-1}^{-1}\frac{1}{k}\sum_{A \in \binom{S}{i-1} } \left( |\Gamma_A| - (c/2)n\right)}{k}.
		\end{align*}
		Since
		\begin{align*}
			\sum_{A \in \binom{S}{i-1} } |\Gamma_A| = i|\mathcal{F}_i| \geq ic\binom{n}{i} > (n-i)c\binom{n}{i-1},
		\end{align*}
		we get
		\[\sum_{A \in \binom{S}{i-1} }\left( |\Gamma_A| - (c/2)n\right)  > (n-i)c\binom{n}{i-1} - (c/2)n\binom{n}{i-1},\]
		and thus
		\[|\mathcal{P}| \geq \binom{n}{i-1}\binom{\frac{nc}{2k} - \frac{ci}{k}}{k}.\]
		
		If $n$ is large enough compared to $i$ and $k$, then \[|\mathcal{P}| \geq \left( \frac{c}{12k^2}\right)^k \binom{n}{i-1}\binom{n}{k}.\]
		
		Since there are $\binom{n}{k}$ possible $M \in \binom{S}{k}$, there is an $M$ with at least $\left( \frac{c}{12k^2}\right)^k \binom{n}{i-1}$ different $A \in \binom{S}{i-1}$ such that $(A,M) \in \mathcal{P}$. These $A$ will form $\mathcal{F}_{i-1}$.
	\end{proof}
	
	\begin{proof}[Proof of Theorem~\ref{thm:Qholmsen}]
		We are given $\alpha\in(0,1)$, and our goal is to find the corresponding $\beta\in(0,1)$ satisfying Definition~\ref{def:QFHChain}. Let $f(x) = \left( \frac{x}{12k^2}\right)^k$, $\alpha_0 = \alpha$, $\alpha_{i+1} = f(\alpha_i)$. We will show that $\beta = \alpha_{k-1}$ is a good choice. Fix $\ell\in\Zb$ and suppose for a contradiction that $\left |\Hc_\ell \cap \binom{S}{k} \right | \geq \alpha \binom{n}{k}$, but $\Hc_{\ell + k+1}$ has no edge of size at least $\beta n$ inside $S$. Since $\Hch$ has Colorful Helly Number $k$, it has Helly Number at most $k$, so $\Hc_{\ell + k+1}$ having no edge of size at least $\beta n$ implies $\omega_k (\Hc_{\ell + k}|_S) < \beta n$.
		
		Set $\mathcal{F}_k = \Hc_\ell \cap \binom{S}{k}$. Since $\Hch$ is a hypergraph chain, $\mathcal{F}_k \subset \Hc_{\ell +i}$ for all $i \geq 0$, in particular, $\mathcal{F}_k \subset \Hc_{\ell + k}$. We have $|\mathcal{F}_k| \geq \alpha \binom{n}{k}$ and $\omega_k(\Hc_{\ell + k}|_S) < \beta n \leq (\alpha/2)n$, so we can apply Lemma~\ref{lem:newcolorclass} with $t=\ell+k-1$ and $c = \alpha$ to obtain an $\mathcal{F}_{k-1} \subset \binom{S}{k-1}$ with $|\mathcal{F}_{k-1}| \geq \alpha_1 \binom{n}{k-1}$ and an $M_1 \in \binom{S}{k} \setminus \Hc_{\ell + k-1}$ such that $A \cup \{v\} \in \mathcal{F}_{k}$ for all $A \in \mathcal{F}_{k-1}$ and $v \in M_1$. Now, we have $|\mathcal{F}_{k-1}| \geq \alpha_1 \binom{n}{k-1}$ and $\omega_k(\Hc_{\ell + k-1}|_S) \leq \omega_k(\Hc_{\ell + k}|_S) < \beta n \leq (\alpha_1/2)n$ and we can apply Lemma~\ref{lem:newcolorclass} again, this time with $t=\ell+k-2$ and $c = \alpha_1$, to obtain an $\mathcal{F}_{k-2} \subset \binom{S}{k-2}$ with $|\mathcal{F}_{k-2}| \geq \alpha_2 \binom{n}{k-2}$ and an $M_2 \in \binom{S}{k} \setminus \Hc_{\ell + k-2}$ such that $(A \cup \{v\}) \in \mathcal{F}_{k-1}$ for all $A \in \mathcal{F}_{k-2}$ and $v \in M_2$. Note that $(A \cup \{v_1, v_2\}) \in \mathcal{F}_k = \Hc_\ell \cap \binom{S}{k}$ for all $A \in \mathcal{F}_{k-2}$, $v_1 \in M_1$, $v_2 \in M_2$.
		
		After repeating this process $k-1$ times, we obtain an $\mathcal{F}_1 \subset \binom{S}{1}$ with $|\mathcal{F}_1| \geq \alpha_{k-1}n = \beta n$ and $M_1, \ldots, M_{k-1} \in \binom{S}{k} \setminus \Hc_{\ell + 1}$ such that $A \cup \{v_1, \ldots, v_{k-1}\} \in \Hc_\ell \cap \binom{S}{k}$ for all $A \in \mathcal{F}_{1}$, $v_1 \in M_1, \ldots ,v_{k-1} \in M_{k-1}$. Since $\omega_k(\Hc_{\ell +1}|_S) < \beta n$,  there must be an $M_k \in \binom{V(\mathcal{F}_1)}{k} \setminus \Hc_{\ell + 1}$. But regarding $M_1, \ldots, M_k$ as color classes, $\Hch$ having Colorful Helly-number $k$ yields a contradiction, since $M_1 \colortimes \ldots \colortimes M_k \subset \Hc_\ell$, but there is no color class $M_i \in \Hc_{\ell + 1}$.
	\end{proof}
	
	\section{Consequences for Quantitative Theorems}\label{sec:geometric}
	
	If $v = d^{-cd^2}$ from Theorem~\ref{thm:QCH}, then $\Quantchain$ has Colorful Helly Number $3d$ by Theorem~\ref{thm:QCH}, so the following Corollary follows from Theorem~\ref{thm:Qholmsen}.
	
	\begin{cor}\label{cor:QFH3d}
		For every dimension $d \geq 1$ and every $\alpha\in(0,1)$, there is a 
		$\beta\in(0,1)$ such that the following holds.
		
		\noindent Let $\C$ be a finite family of convex sets in $\Red$. Assume that among all 
		subfamilies of size $3d$, there are at least $\alpha \binom{|\C|}{3d}$ for 
		whom the intersection of the $3d$ members is of volume at least one.
		
		\noindent Then, there is a subfamily $\C^\prime\subset\C$ of size at least 
		$\beta |\C|$ such that $\vol{\bigcap\limits_{C\in \C^\prime}C}\geq d^{-cd^3}$ with a universal constant $c>0$.
	\end{cor}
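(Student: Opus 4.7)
The plan is to realise the corollary as a direct application of Theorem~\ref{thm:Qholmsen} to the quantitative chain $\Quantchain$ introduced in Section~\ref{sec:chains}. Let $c_0$ denote the universal constant from Theorem~\ref{thm:QCH}, set $v = d^{-c_0 d^2}$, and consider the chain $\Hc_\ell = \Quanthyp$ on the base set $V = \cvx$. First I would verify that this chain has Colorful Helly Number $3d$ in the sense of Definition~\ref{def:QCHChain}. For $\ell = 0$ this is precisely Theorem~\ref{thm:QCH}. For an arbitrary $\ell \in \Zb$ the same conclusion follows after rescaling all convex sets by the factor $v^{-\ell/d}$: volumes transform by $v^{-\ell}$, turning ``intersection volume at least $v^\ell$'' into ``intersection volume at least $1$''; Theorem~\ref{thm:QCH} then delivers a color class whose intersection has volume at least $d^{-c_0 d^2} = v$, which after scaling back corresponds to volume at least $v^{\ell+1}$.

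With Colorful Helly Number $3d$ established, Theorem~\ref{thm:Qholmsen} applied with $k = 3d$ yields a function $\beta : (0,1) \to (0,1)$ such that the sub-chain $(\Hc_{(3d+1)\ell})_{\ell \in \Zb}$ has Fractional Helly Number $3d$: whenever $\left|\Hc_0 \cap \binom{S}{3d}\right| \geq \alpha \binom{|S|}{3d}$ for a finite $S \subset V$, there exists $S^\prime \subset S$ with $|S^\prime| \geq \beta(\alpha)\, |S|$ and $S^\prime \in \Hc_{3d+1}$.

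Finally I would translate back to geometry. Given the family $\C$ as in the hypothesis of the corollary, a subfamily of $3d$ members whose intersection has volume at least $1 = v^0$ is precisely an edge of $\Hc_0 = \Hc_{(3d+1)\cdot 0}$. Hence the fractional assumption supplies $\C^\prime \subset \C$ of size at least $\beta(\alpha)\, |\C|$ lying in $\Hc_{3d+1}$, i.e.\ $\vol{\bigcap_{C \in \C^\prime} C} \geq v^{3d+1} = d^{-c_0 d^2 (3d+1)} \geq d^{-c d^3}$ for a suitable absolute $c > 0$ (taking $c = 4 c_0$ works since $3d+1 \leq 4d$ for $d \geq 1$). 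The only non-routine point in this plan is the scale-invariant verification of the Colorful Helly Number for $\Quantchain$, but this is settled by the dilation above; no ingredients beyond Theorems~\ref{thm:QCH} and~\ref{thm:Qholmsen} are required, and the larger exponent $d^3$ (as opposed to $d^2$) simply reflects the shift by $3d+1$ built into the conclusion of Theorem~\ref{thm:Qholmsen}.
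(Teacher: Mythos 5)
Your proposal is correct and follows essentially the same route as the paper: translate the hypothesis into the hypergraph-chain language, invoke Theorem~\ref{thm:QCH} to get Colorful Helly Number $3d$ for $\Quantchain$ with $v = d^{-c_0 d^2}$, apply Theorem~\ref{thm:Qholmsen}, and unwind the index shift $(3d+1)$ into the exponent. The one thing you spell out that the paper takes as already established (in Section~\ref{sec:chains}) is the dilation argument showing the colorful Helly property holds at every level $\ell$, not just $\ell = 0$; this is a worthwhile detail but not a different method.
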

	
	\begin{proof}
		The above claim is equivalent to saying that $\Quantchain$ has Fractional Helly Number $3d$, if $v = d^{-c'd^3}$ with a universal constant $c'$. Theorem~\ref{thm:QCH} states that $\Quantchain$ has Colorful Helly Number $3d$, if $v = d^{-cd^2}$ as in Theorem~\ref{thm:QCH}. By applying Theorem~\ref{thm:Qholmsen} to the latter Hypergraph Chain, we can conclude, that $\left(\mathbb{Q}_d\left(v^{(3d+1)\ell}\right) \right)_{\ell \in \Zb}$ has Fractional Helly Number $3d$ and $v = d^{-cd^2}$. But this is equvivalent to $\Quantchain$ having Fractional Helly Number $3d$ if $v = d^{-c'd^3}$.
	\end{proof}
	
	This is a slight improvement on the Fractional Helly Number, which was $3d+1$ in Theorem~\ref{thm:QFH}. Can we go below $3d$? Theorem~\ref{thm:stabHelly} implies at least a stability version of the Quantitative Helly Theorem with Helly Number $2d$ as follows.
	
	\begin{cor}\label{cor:QstabHelly}
		For every positive integer $d$ there exists a function $\beta: (0,1) \to [0,1)$ with $\lim_{\alpha \to 1} \beta ( \alpha) = 1 $ such that the following holds.
		
		\noindent Let $\C$ be a finite family of convex sets in $\Red$. Assume that among all 
		subfamilies of size $2d$, there are at least $\alpha \binom{|\C|}{2d}$ for 
		whom the intersection of the $2d$ members is of volume at least one.
		
		\noindent Then, there is a subfamily $\C^\prime\subset\C$ of size at least 
		$\beta |\C|$ such that $\vol{\bigcap\limits_{C\in \C^\prime}C}\geq d^{-cd^2}$ with a universal constant $c>0$.
	\end{cor}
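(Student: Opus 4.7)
The plan is to deduce the corollary from Theorem~\ref{thm:stabHelly} applied to a single hypergraph chain $\Quantchain$ for a suitably small $v\in(0,1)$ depending only on $d$. The key is to verify that this chain has both Helly Number $2d$ and Colorful Helly Number $3d$ in the sense of Definitions~\ref{def:QHellyChain} and \ref{def:QCHChain}, with one common value of $v$.

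First I would check the Helly Number $2d$ property. By Brazitikos's refinement of Theorem~\ref{thm:QH}, the constant there can be taken to be $c(d)\approx d^{-3d/2}$. A standard scaling argument---rescaling every convex set in the family by a factor of $v^{-\ell/d}$---shows that whenever every $2d$-subfamily of a set $S$ has intersection of volume at least $v^\ell$, the full intersection has volume at least $c(d)\cdot v^\ell$. Provided $v\leq c(d)$, this exceeds $v^{\ell+1}$, so $S\in\Hc_{\ell+1}$, yielding Helly Number $2d$. Secondly, Theorem~\ref{thm:QCH} (combined with the same type of scaling) gives Colorful Helly Number $3d$ for the chain whenever $v$ is at most the threshold $d^{-c_0d^2}$ from that theorem. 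Choosing $v=d^{-cd^2}$ with $c>0$ a sufficiently large universal constant makes both properties hold simultaneously.

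Next I would apply Theorem~\ref{thm:stabHelly} to this chain with $h=2d$, $k=3d$, and index $\ell=0$. The hypothesis of the corollary is exactly $\bigl|\Hc_0\cap\binom{\C}{2d}\bigr|\geq\alpha\binom{|\C|}{2d}$, and the conclusion produces $\C'\subset\C$ of size at least $\beta(\alpha)|\C|$ with $\C'\in\Hc_3=\Qd(v^3)$. Hence $\vol{\bigcap_{C\in\C'}C}\geq v^3=d^{-3cd^2}$, which has the required form $d^{-c'd^2}$, and the stability property $\lim_{\alpha\to 1}\beta(\alpha)=1$ carries over directly from Theorem~\ref{thm:stabHelly}.

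I expect the only real obstacle to be a mild bookkeeping step: one must select a single value of $v$ that lies below both the Helly threshold $c(d)\approx d^{-3d/2}$ and the Colorful Helly threshold $d^{-c_0d^2}$. Since the latter is asymptotically the smaller of the two and both are of the form $d^{-\mathrm{poly}(d)}$, this is absorbed into the constant $c$ in $v=d^{-cd^2}$, and no new ideas beyond those in Theorem~\ref{thm:stabHelly} are needed.
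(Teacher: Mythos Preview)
Your proposal is correct and follows essentially the same route as the paper: apply Theorem~\ref{thm:stabHelly} to the chain $\Quantchain$ with $v=d^{-cd^2}$, invoking Theorem~\ref{thm:QH} for Helly Number $2d$ and Theorem~\ref{thm:QCH} for Colorful Helly Number $3d$, then read off $\C'\in\Qd(v^3)$. You are simply more explicit than the paper about the scaling argument and about choosing a single $v$ below both thresholds, but no new ingredient appears.
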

	
	\begin{proof}
		Since $\Quantchain$, with $v = d^{-cd^2}$ from Theorem~\ref{thm:QCH}, has Helly Number $2d$ by Theorem~\ref{thm:QH} and Colorful Helly Number $3d$ by Theorem~\ref{thm:QCH}, we can apply Theorem~\ref{thm:stabHelly}. The assumption of Corollary~\ref{cor:QstabHelly} states that for a finite subset of convex sets $\C$, the inequality $\left|\mathcal{Q}_d(v^0) \cap \binom{\C}{2d}\right| \geq \alpha \binom{|\C|}{2d}$ holds with some $\alpha \in (0,1)$, where $v$ can be $v = d^{-cd^2}$ from Theorem~\ref{thm:QCH}. Theorem~\ref{thm:stabHelly} yields a subfamily $\C^\prime \subset \C$ with $\C^\prime \in \mathcal{Q}_d(v^3)$ and $|\C^\prime| \geq \beta (\alpha)|\C|$, where $\beta$ is the function from Theorem~\ref{thm:stabHelly}. For $\C^\prime$, the inequality $\vol{\bigcap\limits_{C\in \C^\prime}C}\geq \left(d^{-cd^2}\right)^3 = d^{-3cd^2}$ holds.
	\end{proof}

	\section{Remarks}\label{sec:rem}
	
	The following questions are left open.
	
	\begin{conj}\label{conj:QFH2d}
		For every dimension $d$, there is a $v = v(d) \in (0,1)$, such that $\Quantchain$ has Fractional Helly Number $2d$.
	\end{conj}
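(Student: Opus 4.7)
The plan is to reduce Conjecture~\ref{conj:QFH2d} to a Quantitative Colorful Helly Theorem with only $2d$ color classes, and then attack that reduced statement directly. Specifically, suppose one could establish that there exists $v = v(d) \in (0,1)$ such that for any $2d$ finite families $\C_1, \ldots, \C_{2d}$ of convex sets in $\Red$ with every colorful selection having intersection of volume at least one, some color class $\C_j$ satisfies $\vol{\bigcap_{C \in \C_j} C} \geq v$. In the language of hypergraph chains, this says that $\Quantchain$ has Colorful Helly Number $2d$, so Theorem~\ref{thm:Qholmsen} would immediately yield Fractional Helly Number $2d$ for the subchain $(\Qd(v^{(2d+1)\ell}))_{\ell \in \Zb}$, which is equivalent to the conjecture after a reparametrisation of $v$.

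For the reduced colorful statement, first I would revisit the proof of Theorem~\ref{thm:QCH} from \cite{damasdi2021colorful}, which achieves $3d$ colors by combining the classical Colorful Helly Theorem with a volume-control step based on ellipsoidal approximations, and attempt to replace its classical input by Theorem~\ref{thm:QH} directly. Since BKP's quantitative Helly already operates with $2d$ sets, the hope is to avoid the extra factor of $d$ currently spent on promoting empty-intersection into large-volume intersection. Concretely, for each colorful selection I would extract a BKP-type structure --- for instance, a $2d$-subset whose John ellipsoid approximates the intersection well --- and then use a supersaturation or pigeonhole argument across colorful selections to locate a single color class on which the volume of the full intersection remains bounded below by a function of $d$ only.

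A purely combinatorial alternative is to iterate the peeling of Lemma~\ref{lem:newcolorclass} using size-$h$ missing edges supplied by part~\ref{item:hlessk} of Lemma~\ref{lem:kim} in place of size-$k$ missing edges. After $2d-1$ peeling rounds one would obtain color classes $M_1, \ldots, M_{2d}$ of size $2d$ whose colorful selections all lie in $\Hc_\ell$; but since $\Hch$ is only assumed to have Colorful Helly Number $3d$, no contradiction follows directly. One would then need an extra ingredient --- perhaps enlarging each $M_i$ to size $3d$ using Helly-certified vertices extracted from within $S$, or sharpening Lemma~\ref{lem:kim} to account jointly for the Helly and Colorful Helly Numbers --- in order to reinstate a colorful obstruction of the required width $3d$.

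The hard part will be precisely closing the gap of $d$ between the known Helly Number $2d$ (Theorem~\ref{thm:QH}) and the known Colorful Helly Number $3d$ (Theorem~\ref{thm:QCH}) for $\Quantchain$. Theorem~\ref{thm:stabHelly} already exploits this Helly/Colorful-Helly mismatch, but only in the $\alpha \to 1$ regime; extending it uniformly in $\alpha$ is essentially equivalent to the conjecture. My expectation is that the geometric route through a sharper Quantitative Colorful Helly with $2d$ colors will be the more tractable of the two approaches, since it can draw directly on tools from asymptotic convex geometry, whereas the combinatorial route seems to require structural information about hypergraph chains that is not available from the axioms of Definition~\ref{def:Hchain} alone.
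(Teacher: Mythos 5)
This statement is not a theorem in the paper --- it appears in Section~\ref{sec:rem} under the heading ``The following questions are left open,'' i.e.\ it is an open conjecture, and the paper offers no proof of it. So there is no paper proof to compare against; the only thing the paper says about Conjecture~\ref{conj:QFH2d} is the one-line remark that Theorem~\ref{thm:Qholmsen} reduces it to Conjecture~\ref{conj:QCH2d}.

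Your proposal is not a proof either, and to your credit you essentially say so. The first paragraph correctly reproduces the paper's own observation: if one could show that $\Quantchain$ has Colorful Helly Number $2d$ (i.e.\ Conjecture~\ref{conj:QCH2d}), then Theorem~\ref{thm:Qholmsen} would give Fractional Helly Number $2d$ after reindexing the chain. That reduction is sound, but it merely trades one open problem for another. The second paragraph (``revisit the proof of Theorem~\ref{thm:QCH}\ldots the hope is to avoid the extra factor of $d$'') is a research heuristic, not an argument; nothing in it addresses why the $3d$ color classes in \cite{damasdi2021colorful} could be cut to $2d$, and indeed you flag this as ``the hard part.'' The third paragraph is the more interesting direction, but you yourself note that after $2d-1$ peeling rounds ``no contradiction follows directly'' because the available colorful obstruction has width $3d$, not $2d$, and you do not supply the ``extra ingredient'' needed to close the gap. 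Concretely: part~\ref{item:hlessk} of Lemma~\ref{lem:kim} gives you $h$-sized missing edges, but Lemma~\ref{lem:newcolorclass} as proved produces color classes of size $k$ (via part~\ref{item:heqk}), and the final contradiction in the proof of Theorem~\ref{thm:Qholmsen} uses exactly $k$ color classes of size $k$ against Colorful Helly Number $k$; replacing these by $h$ classes of size $h$ gives you nothing to feed the Colorful Helly hypothesis, which is the precise obstruction you would need a new idea to overcome. As written, the proposal correctly identifies the landscape but does not establish the conjecture.
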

	
	\begin{conj}\label{conj:QCH2d}
		For every dimension $d$, there is a $v = v(d) \in (0,1)$, such that $\Quantchain$ has Colorful Helly Number $2d$.
	\end{conj}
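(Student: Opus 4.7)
The plan is to attack Conjecture~\ref{conj:QCH2d} by upgrading the $3d$-color proof of Theorem~\ref{thm:QCH} in the spirit of Theorem~\ref{thm:QH}, whose quantitative Helly number is already $2d$. By rescaling we may take $\ell = 0$, so the hypothesis becomes: every colorful selection $C_1\in\Ci_1,\dots,C_{2d}\in\Ci_{2d}$ satisfies $\vol{\bigcap_{i=1}^{2d}C_i}\geq 1$, and the goal is to produce a $v=v(d)>0$ and an index $j$ such that $\vol{\bigcap_{C\in\Ci_j}C}\geq v$.

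First I would fix a reference colorful selection $C_1^\star,\dots,C_{2d}^\star$ and set $P:=\bigcap_{i=1}^{2d}C_i^\star$, so $\vol(P)\geq 1$. Following the John-ellipsoid approach used by Naszódi and by Brazitikos in their proofs of Theorem~\ref{thm:QH}, I would locate an ellipsoid $E\subseteq P$ of volume at least $\vol(P)\cdot d^{-O(d)}$, and after an affine transformation assume $E=\Ball$. For every color $i$ and every $C\in\Ci_i$, substituting $C$ for $C_i^\star$ yields another colorful $2d$-tuple of total intersection volume at least $1$, so $\vol{C\cap\bigcap_{j\neq i}C_j^\star}\geq 1$. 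This constrains each $C\in\Ci_i$ to contain a definite portion of $P$ in a uniform, quantitative sense, a condition I would then try to upgrade to a lower bound on $\vol(K_i)$, where $K_i:=\bigcap_{C\in\Ci_i}C$, for at least one $i$.

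The crucial step is converting the per-element constraints on members of $\Ci_i$ into a volume lower bound on $K_i$ for some $i$. One natural attempt is to identify a color $i$ whose elements jointly contain an ellipsoid concentric with the John ellipsoid of $P$, then invoke Theorem~\ref{thm:QH} within that ellipsoid. A more ambitious route is a Sarkaria-type tensor reformulation, in which the desired colorful statement with $2d$ classes is recast as the non-vanishing of an appropriate multilinear expression; the value $2d$ should then emerge from the dimensions of the factor spaces, paralleling how $2d$ arises in Theorem~\ref{thm:QH}. Either route provides a candidate $v(d)\approx d^{-Cd^2}$, of the same order as in Theorem~\ref{thm:QCH}.

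The main obstacle is precisely the reduction from $3d$ to $2d$ color classes. The argument of \cite{damasdi2021colorful} uses a randomized subselection that inherently introduces a loss requiring three groups of $d$ sets, and I do not see how to preserve its volume accounting with only two. A genuinely new idea---probably combining the John-ellipsoid machinery underlying Theorem~\ref{thm:QH} with a colorful Carathéodory or centerpoint argument tuned to $2d$ sets in $\Red$---appears necessary to replace the probabilistic step. The status of the statement as a conjecture in the present paper reflects exactly this gap, and I would expect that the correct approach emerges only after the right geometric reformulation of ``colorful $2d$-fold volumetric intersection'' is identified.
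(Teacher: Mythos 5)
This statement is Conjecture~\ref{conj:QCH2d}, which the paper states as an \emph{open problem}: there is no proof of it anywhere in the paper, and indeed Section~\ref{sec:rem} is explicitly a list of questions ``left open.'' Consequently there is no ``paper's own proof'' to compare your attempt against.

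Your write-up is self-aware on this point: you correctly recognize that the statement is conjectural, and your final paragraph explicitly concedes that a genuinely new idea seems necessary and that the conjectural status reflects a real gap. That is the right conclusion. What you have produced is therefore not a proof but a survey of candidate strategies (the John-ellipsoid reduction following Naszódi and Brazitikos, and a Sarkaria-type tensor reformulation), together with an accurate identification of the main obstacle --- that the argument of Damásdi, Földvári and Naszódi for Theorem~\ref{thm:QCH} inherently groups the color classes into three blocks of $d$, and no known mechanism collapses that to two. This is a fair and reasonably well-informed assessment of the state of the problem, but it does not constitute a proof of the conjecture, nor does the paper offer one, so there is nothing further to verify. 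If you wish to make progress, note also that by Theorem~\ref{thm:Qholmsen} establishing this conjecture would immediately yield Conjecture~\ref{conj:QFH2d}; conversely, a counterexample to Conjecture~\ref{conj:QFH2d} would refute Conjecture~\ref{conj:QCH2d} as well, so the two stand or fall together in one direction.
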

	
	Our Theorem~\ref{thm:Qholmsen} shows that proving Conjecture~\ref{conj:QCH2d} would also confirm Conjecture~\ref{conj:QFH2d}.
	
	\section*{Acknowledgement}
	
	The author would like to thank Márton Naszódi for providing the problem and for all the valuable discussions during the research.
	
	\small
	\bibliographystyle{plain}
	\bibliography{biblio}

\end{document}